\documentclass{amsart}
\usepackage{amssymb,amsmath, amsfonts, amsrefs, tikz, epsfig, float}
\usepackage{amsthm}
\usepackage{mathrsfs}
\usepackage{enumitem, indentfirst}
\usepackage[fleqn,tbtags]{mathtools}
\usepackage{color}
\frenchspacing

 \newtheorem{theorem}{Theorem}%[section]
  \newtheorem*{maintheorem}{Theorem}
  \newtheorem*{dougtheorem}{Douglas' Factorization Theorem}

 \newtheorem{claim}[theorem]{Claim}

 \theoremstyle{definition}

 \theoremstyle{remark}

 \newtheorem*{remark}{Remark}
  \numberwithin{equation}{section}

\renewcommand{\theta}{\vartheta}

\DeclareMathOperator{\tform}{\mathfrak{t}}

\DeclareMathOperator{\wform}{\mathfrak{w}}

\DeclareMathOperator{\ran}{ran}

\DeclareMathOperator{\lath}{Lat(\hil)}
\DeclareMathOperator{\lategyh}{Lat_1(\hil)}

\DeclareMathOperator{\glh}{GL(\hil)}

\DeclarePairedDelimiterX\sipt[2]{(}{)_{\tform}}{#1\,\delimsize\vert\,#2}
\DeclarePairedDelimiterX\sipv[2]{(}{)_{v}}{#1\,\delimsize\vert\,#2}
\DeclarePairedDelimiterX\sipw[2]{(}{)_{w}}{#1\,\delimsize\vert\,#2}

\newcommand{\dupN}{\mathbb{N}}

\newcommand{\seq}[1]{(#1_{n})_{n\in\dupN}}

\newcommand{\dupC}{\mathbb{C}}

\newcommand{\hil}{\mathcal{H}}

\newcommand{\bh}{\mathcal{B}(\hil)}

\DeclarePairedDelimiterX\abs[1]{\lvert}{\rvert}{#1}
\DeclarePairedDelimiterX\sip[2]{\langle}{\rangle}{#1\,,#2}
\DeclarePairedDelimiterX\siptilde[2]{(}{)_{\!_{\widetilde{A}}}}{#1\,\delimsize\vert\,#2}
\DeclarePairedDelimiterX\sipf[2]{(}{)_{f}}{#1\,\delimsize\vert\,#2}
\DeclarePairedDelimiterX\sipg[2]{(}{)_{g}}{#1\,\delimsize\vert\,#2}
\DeclarePairedDelimiterX\siptw[2]{(}{)_{\tform+\wform}}{#1\,\delimsize\vert\,#2}
\DeclarePairedDelimiterX\set[2]{\{}{\}}{#1\,:\,#2}
\DeclarePairedDelimiterX\dual[2]{\langle}{\rangle}{#1,#2}
\DeclarePairedDelimiterX\sipa[2]{(}{)_{\!_A}}{#1\,\delimsize\vert\,#2}
\DeclarePairedDelimiterX\sipc[2]{(}{)_{\!_C}}{#1\,\delimsize\vert\,#2}
\DeclarePairedDelimiterX\sipab[2]{(}{)_{\!_{A+B}}}{#1\,\delimsize\vert\,#2}
\DeclarePairedDelimiterX\sipb[2]{(}{)_{\!_B}}{#1\,\delimsize\vert\,#2}

\allowdisplaybreaks
\begin{document}
\title[Maps preserving Douglas solution]{Maps preserving the Douglas solution of operator equations}

\author[Zs. Tarcsay]{Zsigmond Tarcsay}
\thanks{The corresponding author Zs. Tarcsay was supported by DAAD-TEMPUS Cooperation Project ``Harmonic Analysis and Extremal Problems'' (grant no. 308015),  by the J\'anos Bolyai Research Scholarship of the Hungarian Academy of Sciences, and by the \'UNKP--20-5-ELTE-185 New National Excellence Program of the Ministry for Innovation and Technology. ``Application Domain Specific Highly Reliable IT Solutions'' project  has been implemented with the support provided from the National Research, Development and Innovation Fund of Hungary, financed under the Thematic Excellence Programme TKP2020-NKA-06 (National Challenges Subprogramme) funding scheme.}
\address{%
Zs. Tarcsay \\ Department of Applied Analysis  and Computational Mathematics\\ E\"otv\"os Lor\'and University\\ P\'azm\'any P\'eter s\'et\'any 1/c.\\ Budapest H-1117\\ Hungary\\ and 
Alfr\'ed R\'enyi Institute of Mathematics\\ Re\'altanoda utca 13-15.\\ Budapest H-1053\\ Hungary}
\email{tarcsay@cs.elte.hu}

\subjclass[2010]{Primary: 15A86; 47B49  Secondary: 47A62}

\keywords{Douglas  theorem, operator equation, preserver problem}

\begin{abstract}
We consider bijective maps $\phi$ on the full operator algebra $\bh$ of an infinite dimensional Hilbert space with the property that, for every $A,B,X\in\bh$, $X$ is the Douglas solution of the equation $A=BX$  if and only if  $Y=\phi(X)$ is the Douglas solution of the equation $\phi(A)=\phi(B)Y$. We prove that those maps are implemented by a unitary or anti-unitary map $U$, i.e., $\phi(A)=UAU^*$.    
\end{abstract}

\maketitle

\section{Introduction}

 Operator equations of the form 
\begin{equation}\label{E:BX=A}
    BX=A
\end{equation}  
arise in many problems in engineering, physics, and statistics. In \cite{douglas} R. G. Douglas considered the problem in the context of Hilbert space operators. He established the following characterisation of solvability: 
\begin{dougtheorem}
Let $\hil$ be a  Hilbert space and let $A,B\in\bh$. Then the following statements are equivalent:
    \begin{enumerate}[label=\textup{(\roman*)}]
        \item the operator equation $BX=A$ has a solution $X\in\bh$,
        \item  there exists $\lambda>0$ s.t. $\|A^*x\|\leq \lambda \|B^*x\|$ for all $x\in\hil$, 
        \item  we have the range inclusion $\ran A\subseteq \ran B$. 
    \end{enumerate}
    In any case, there is a unique operator $X=D$ such that 
    \begin{enumerate}
        \item[$(D1)$] $\ran D\subseteq [\ker B]^\perp$,
        \item[$(D2)$] $\ker D=\ker A$,
        \item[$(D3)$] $\|D\|=\inf\{\lambda>0\,:\,\|A^*x\|\leq \lambda \|B^*x\|\quad (\forall x\in\hil) \}$.
    \end{enumerate}
\end{dougtheorem}
In fact, condition $(D1)$ automatically implies both $(D2)$ and $(D3)$. The unique operator $D$ satisfying conditions $(D1)$-$(D3)$ is called the \textit{Douglas solution}  (or  \textit{reduced solution}) of \eqref{E:BX=A}. 

There are many important objects in operator theory, including the Moore-Penrose pseudoinverse \cite{Corach}, the parallel sum \cite{F-W} or the Schur complement \cite{TZS-Schur}, which can be defined as the Douglas solution of a suitably posed operator equation. Generalisations of Douglas' factorisation theorem to Banach spaces \cites{Embry, Barnes}, locally convex spaces \cite{TZs-jmaa}, Hilbert $C^*$-modules \cite{Moslehian} and unbounded operators and relations \cites{Forough, PopSeb} are also available.

In this paper we consider the following nonlinear preserver problem: Let $\phi:\bh\to\bh$ be a bijective map with the property that, for every triple $A,B, X$ of bounded operators in $\bh$,  $X$ is the Douglas solution of the equation $A=BX$ if and only if $Y=\phi(X)$ is the Douglas solution of the equation $\phi(A)=\phi(B)Y$. (Shortly, we say in that case that  $\phi$ preserves the Douglas solution in both directions.) In this note we describe the form all such transformation $\phi$. Our result shows that the structure of those mappings is quite rigid, namely, every Douglas solution preserving map $\phi$ is of the form
\begin{equation*}
    \phi(A)= UAU^*,\qquad A\in\bh,
\end{equation*}
for a fixed unitary or anti-unitary map $U$. 

%In this paper we consider  bijective maps $\phi:\bh\to\bh$ which  possesses the property that, for every triple $A,B, X$ of bounded operators in $\bh$,  $X$ is the Douglas solution of the equation $A=BX$ if and only if $Y=\phi(X)$ is the Douglas solution of the equation $\phi(A)=\phi(B)Y$. Shortly, we say in that case that the map $\phi$ preserves the Douglas solution in both directions. The main goal of this note is to show that the structure of those maps is quite rigid, namely, those $\phi$  are implemented by a unitary or anti-unitary map. More precisely, we are going to prove the following result:

We stress that the only constraint concerns the dimension of the underlying Hilbert space, and no algebraic or topological assumptions like linearity or continuity on $\phi$ are imposed. On the contrary: these properties follow from the intrinsic structure of such a transformation.
%\textbf{xxxxx}
\section{The main theorem}

Before we state and prove our main result, let us fix some notation. Throughout, $\hil$ denotes an infinite dimensional complex Hilbert space, and $\bh$ stands for the $C^*$-algebra of all bounded, linear operators $A:\hil\to\hil$. The kernel and range spaces of an operator $A\in\bh$ are denoted by $\ker A$ and $\ran A$, respectively. The collection of the ranges of all bounded operators is denoted by $\lath$, that is, 
\begin{equation*}
  \lath\coloneqq \set{\ran A}{A\in\bh}.
\end{equation*}
Note that $\lath$ is not identical with the class of all linear submanifolds of $\hil$ (cf. \cite{F-W}). For the set of one-dimensional subspaces of $\hil$ we use the symbol $\lategyh$:
\begin{equation*}
    \lategyh\coloneqq \set{\dupC e}{e\in\hil,e\neq0}.
\end{equation*}
For given two vectors $e,f\in\hil$, we define the one-rank operator $e\otimes f$ by
\begin{equation*}
    (e\otimes f)(x)\coloneqq \sip{x}{f} e,\qquad x\in\hil.
\end{equation*}
If $\mathcal M\in\lath$ is a closed subspace of $\hil$ then we denote by $P_{\mathcal M}$ the orthogonal projection onto $\mathcal M$. If $\mathcal M$ is of the form $\mathcal M=\dupC e$ for  some $e\in\hil$ then we set $P_e\coloneqq P_{\mathcal M}$. Finally, the set of invertible operators (that is, of those operators which do not have $0$ in their spectrum) is denoted by $\glh$.

\begin{maintheorem}\label{T:maintheorem}
Let $\hil$ be an infinite dimensional Hilbert space. A bijective map $\phi:\bh\to\bh$  preserves the Douglas solution in both directions if and only if there exists a unitary or anti-unitary operator $U:\hil\to\hil$ such that 
\begin{equation}\label{E:UAU}
    \phi(A)=UAU^*,\qquad  A\in\bh.
\end{equation}
\end{maintheorem}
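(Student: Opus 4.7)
The backward direction is routine: for $U\colon\hil\to\hil$ unitary or anti-unitary, the map $A\mapsto UAU^*$ carries the Douglas conditions $(D1)$--$(D3)$ for $A=BX$ to the same conditions for $UAU^*=(UBU^*)(UXU^*)$, because $U[\ker B]^\perp=[\ker(UBU^*)]^\perp$ in both the unitary and anti-unitary case. The heart of the proof is the forward direction, which I would organise in four steps.

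First I would extract the normalizations $\phi(0)=0$ and $\phi(I)=I$. Since the Douglas solution of $0=BX$ is always $X=0$, the hypothesis yields $\phi(0)=\phi(B)\phi(0)$ for every $B$; choosing $B$ with $\phi(B)=0$ forces $\phi(0)=0$. Similarly, $X=A$ is the Douglas solution of $A=IX$, and choosing $A$ with $\phi(A)=I$ gives $\phi(I)=I$. For $B\in\glh$ the Douglas solution of $A=BX$ is the genuine $B^{-1}A$; applying $\phi$ and using a rank-one test against left-multiplication by $\phi(B)$ shows first that $\phi(\glh)=\glh$, and then yields the invertible-multiplicativity
\begin{equation*}
    \phi(BY)\;=\;\phi(B)\phi(Y)\qquad(B\in\glh,\;Y\in\bh).
\end{equation*}
In particular $\phi|_{\glh}$ is a group automorphism; because $\phi(\lambda I)$ commutes with every element of $\glh$ and hence with all of $\bh$, it is central, so $\phi(\lambda I)=\chi(\lambda)I$ for some multiplicative character $\chi\colon\dupC^*\to\dupC^*$, and $\phi(\lambda Y)=\chi(\lambda)\phi(Y)$.

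The decisive structural observation is that $X=B$ is the Douglas solution of $B=BX$ precisely when $B=P_{[\ker B]^\perp}$, i.e., when $B$ is an orthogonal projection; consequently $\phi$ maps orthogonal projections bijectively onto orthogonal projections. Coding $P\leq Q$ by ``$X=P$ is the Douglas solution of $P=QX$'' shows that $\phi$ is an order automorphism of the projection lattice of $\bh$ fixing $0$ and $I$, and in particular preserving atoms (rank-one projections). To pass from this to a unitary or anti-unitary implementation I would invoke the multiplicativity again: for each unit vector $e$, $I-2P_e\in\glh$ is an involution, so $\phi(I-2P_e)$ is an involution; multiplicativity applied to $(I-2P_e)P_f$ for $f\perp e$ and to $(I-2P_e)P_e=-P_e$ forces $\chi(-1)=-1$ (otherwise $\phi(I-2P_e)$ would act as the identity on a dense subspace and contradict injectivity) and exhibits $\phi(I-2P_e)$ as a reflection with $-1$-eigenspace $\ran\phi(P_e)$ and $+1$-eigenspace $\ran\phi(I-P_e)$. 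The commutation pattern ``$(I-2P_e)$ and $(I-2P_f)$ commute iff $e=f$ or $e\perp f$'' is preserved by the group automorphism $\phi|_\glh$, and combined with the order preservation this forces $\phi(I-P_e)=I-\phi(P_e)$, hence orthogonality of $\phi(P_e)$ and $\phi(P_f)$ whenever $e\perp f$. Uhlhorn's theorem, applicable because $\dim\hil\geq 3$, then furnishes a unitary or anti-unitary $U$ with $\phi(P_e)=UP_eU^*$ for every unit vector $e$.

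Replacing $\phi$ by $A\mapsto U^*\phi(A)U$ (respectively its anti-unitary counterpart), I may assume $\phi$ fixes every orthogonal projection. The identity $\phi(BP_f)=\phi(B)P_f$ together with $BP_f=Bf\otimes f$ then forces $\phi(B)f=Bf$ for every unit $f$ and every $B\in\glh$, so $\phi=\mathrm{id}$ on $\glh$; finally, for any $A\in\bh$ and any $B\in\glh$, the identity $\phi(A)=\phi(B\cdot B^{-1}A)=B\phi(B^{-1}A)$ combined with the Douglas-solution preservation for the equation $A=BX$ gives $\phi(A)=A$, whence $\phi(A)=UAU^*$ on all of $\bh$. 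I expect the principal obstacle to be the orthogonality step above: order preservation on the projection lattice alone yields $\phi(I-P)$ only as \emph{some} lattice-complement of $\phi(P)$, not necessarily the orthogonal one, so the extra rigidity must be drawn out of the group-theoretic content of the invertible-multiplicativity relation, most delicately through the involutions $I-2P_e$ and their commutation pattern.
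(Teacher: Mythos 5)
Your overall architecture (normalizations, multiplicativity over $\glh$, projections to projections, order automorphism of the projection lattice, then Uhlhorn) is a genuinely different route from the paper, which instead invokes Moln\'ar--\v{S}emrl's description of automorphisms of $\glh$ to get $\phi(X)=SXS^{-1}$ on invertibles, builds a second implementing operator $T$ for the induced map on operator ranges via the fundamental theorem of projective geometry and Fillmore--Longstaff, and then proves $S=\lambda T$ before extracting unitarity. Most of your preliminary steps are sound (the characterization of orthogonal projections as the $B$ for which $X=B$ solves $B=BX$ in the Douglas sense is a nice observation, and the order coding is correct). But the step you yourself flag as the principal obstacle contains a genuine gap, and the mechanism you propose to close it cannot work.

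Concretely: you claim that the commutation pattern of the involutions $I-2P_e$, preserved by the group automorphism $\phi|_{\glh}$ and ``combined with the order preservation,'' forces $\phi(I-P_e)=I-\phi(P_e)$. It does not. Conjugation $X\mapsto SXS^{-1}$ by an \emph{arbitrary} invertible $S$ preserves every commutation relation and induces an order automorphism $\mathcal M\mapsto S\mathcal M$ of the closed-subspace (hence projection) lattice; the associated involutions $J_e=S(I-2P_e)S^{-1}$ have $-1$-eigenspace $\dupC Se$ and $+1$-eigenspace $S\{e\}^\perp$ and exhibit exactly the commutation pattern ``commute iff $\dupC e=\dupC f$ or $e\perp f$,'' yet $Se\perp S\{e\}^\perp$ fails for non-unitary $S$. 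So every ingredient you list (involutivity, the eigenspace identification, the commutation pattern, order preservation, even the fact that the eigenspaces are ranges of orthogonal projections) is consistent with a configuration in which orthogonality is \emph{not} preserved; no contradiction can be extracted from them alone. The rigidity has to come from somewhere else, namely from tying the operator implementing the $\glh$-automorphism to the operator implementing the range map: the paper shows $\phi(P)=TPT^{-1}$ for projections $P$ (via $\phi(P)=\phi(2P-I)\phi(P)$) and simultaneously that $\phi(P_e)$ is the orthogonal projection onto $\dupC Te$, whence $Te\otimes (T^{-1})^*e=\|Te\|^{-2}\,Te\otimes Te$, which is what actually forces $T$ to be a multiple of a unitary or anti-unitary. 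Without an argument of this kind your appeal to Uhlhorn's theorem has no orthogonality-preserving input to act on. (Two smaller soft spots, fixable but currently glossed over: $\phi(BP_f)=\phi(B)P_f$ with all projections fixed only yields $\phi(B)f\in\dupC\, Bf$, not $\phi(B)f=Bf$ --- the paper needs the auxiliary unitary $V$ with $Ve=f$ to kill the scalar; and your closing identity $\phi(A)=B\phi(B^{-1}A)$ is circular unless you already know $\phi$ fixes $B^{-1}A$.)
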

\begin{proof} We start by observing that the bijective map $\phi:\bh\to\bh$ preserves the Douglas solution in both directions if and only if 
\begin{align}\label{E:iff}
        \left.\begin{array}{cc}
             A=BX  \\
             \ran X\subseteq [\ker B]^{\perp} 
        \end{array}\right\}\iff 
        \left\{\begin{array}{cc}
             \phi(A)=\phi(B)\phi(X)  \\
             \ran \phi(X)\subseteq [\ker \phi(B)]^{\perp} 
        \end{array}\right.
\end{align}
for all $A,B,X\in\bh$. From this we infer that $\phi$ satisfies
\begin{equation}\label{E:BD1}
    \phi(BD)=\phi(B)\phi(D),\qquad \forall B,D\in\bh, \ran D\subseteq [\ker B]^\perp.
\end{equation}
%Our first claim is to show that $\phi$ maps surjective operators into surjective ones, that is, for every operator $B$,

\begin{claim}\label{Cl:surjective} 
$\ran(B) = \mathcal{H} \iff \ran(\phi(B))= \mathcal{H}, \qquad (B \in \mathcal{B}(\mathcal{H}))$
%\begin{equation}\label{E:surjective}
    %\ran B=\hil\quad \iff\quad \ran \phi(B)=\hil.
%\end{equation}
\end{claim}

Indeed, if $\ran B=\hil$ then equation \eqref{E:BX=A} is solvable for every $A\in\bh$, and therefore the same is true for equation  $\phi(B)Y=\phi(A)$. Since $\phi$ is bijective, this implies $\ran\phi(B)=\hil$. The converse direction is proved similarly. 

\begin{claim}\label{Cl:injective}
$\ker B=\{0\}\iff\ker\phi(B)=\{0\},\qquad (B \in \mathcal{B}(\mathcal{H}))$.
\end{claim}

%Next we claim that $\phi$ maps injective operators into injective ones:
%\begin{equation}\label{E:injective}
 %   \ker B=\{0\}\quad\iff\quad \ker\phi(B)=\{0\}.
%\end{equation}
For if $\ker B=\{0\}$, then the Douglas solution of $BX=B$ is $X=I$, so by \eqref{E:iff} we get
\begin{equation*}
    \phi(B)=\phi(B)\phi(I), \quad \mbox{and}\quad \ran \phi(I)\subseteq [\ker\phi(B)]^\perp.
\end{equation*}
Since $\phi(I)$ is surjective, we infer that $\ker \phi(B)=\{0\}$. This proves Claim \ref{Cl:injective}. 

We see from Claims 1 \& 2 and equation \eqref{E:BD1} that the restriction $\phi |_{\glh}$ of $\phi$ onto the group of invertible elements $\glh$ is in fact an automorphism of $\glh$.
Hence, by \cite{AutGLH}*{Theorem 3.1},  there exists a linear or conjugate linear topological isomorphism $S:\hil\to\hil$ such that either
\begin{equation}\label{E:oke}
    \phi(X)=SXS^{-1},\qquad \forall X\in\glh,
\end{equation}
or 
\begin{equation}\label{E:lehetetlen}
    \phi(X)=(SX^{-1}S^{-1})^*,\qquad \forall X\in\glh.
\end{equation}
In Claim \ref{Cl:4} we shall demonstrate that \eqref{E:lehetetlen} may not happen. To do so we first need the following.
\begin{claim}\label{Cl:3}
$\phi$ 
maps orthogonal projections into orthogonal projections.
\end{claim}
 Indeed, if $P=P^2=P^*$ then $\ran P =[\ker P]^{\perp}$ and therefore \eqref{E:BD1} implies 
\begin{equation*}
    \phi(P)=\phi(P\cdot P)=\phi(P)\phi(P),
\end{equation*}
hence $\phi(P)$ is an idempotent. Furthermore, the Douglas solution of the equation  $P=PX$ is just $X=P$, hence the Douglas solution of $\phi(P)=\phi(P)Y$ is $Y=\phi(P)$, which in turn implies $\ran\phi(P)\subseteq [\ker\phi(P)]^\perp$. Thus $\phi(P)$ is an orthogonal projection.  

\begin{claim}\label{Cl:4}
\eqref{E:lehetetlen} cannot hold. 
\end{claim}

 For assume towards a contradiction that it does. Let $\seq e$ be an orthonormal sequence in $\hil$, and  take   a self-adjoint operator $A\in\bh$  such that $Ae_n=\frac1n e_n$. Let us denote by $P_n\coloneqq e_n\otimes e_n$ the orthogonal projection onto $\dupC e_n$. Then we have $AP_n=\frac1n P_n$ and $\ran P_n\subseteq [\ker A]^{\perp}$. From \eqref{E:BD1} and \eqref{E:lehetetlen} we thus obtain 
\begin{equation*}
    \phi(A)\phi(P_n)=\phi(AP_n)=\phi(\tfrac1n IP_n)=\phi(\tfrac 1n I)\phi(P_n)=n\phi(P_n).
\end{equation*}
Since $\phi(P_n)$ is a non-zero orthogonal projection, this means that every integer $n$ is an eigenvalue of $\phi(A)$, that is impossible. This proves Claim \ref{Cl:4}.

\bigskip

So we see now that $\phi$ acts on $\glh$ by $\eqref{E:oke}$. Next we observe that $\phi$ preserves range inclusion in both directions, i.e., 
\begin{equation*}
    \ran A\subseteq \ran B\quad \iff\quad \ran \phi(A)\subseteq \ran \phi(B),\qquad A,B\in\bh,
\end{equation*}
which is obvious by the very definition of Douglas solution preserving maps. Hence $\phi$ induces a bijective map $\Phi:\lath\to\lath$ on the set of  operator ranges by the correspondence
\begin{equation*}
    \Phi(\ran A)\coloneqq \ran \phi(A),\qquad A\in\bh.
\end{equation*}
\begin{claim}\label{Cl:5}
The restriction $\Phi |_{\lategyh}$ of $\Phi$ onto $\lategyh$ is a bijection of $\lategyh$.
\end{claim}

%We claim that $\Phi:\lategyh\to\lategyh$ maps one-dimensional subspaces onto one-dimensional ones. 
Let $B\in\bh$ be a rank one operator and take two non-zero vectors $e_1,e_2\in \ran \Phi(B)$. Let $E_1,E_2\in\bh$ be rank one operators such that $\ran E_i=e_i$, then the equations $\Phi(B)Y=E_i$, $i=1,2$, are both solvable and therefore, setting $C_i\coloneqq \phi^{-1}(E_i)$,  equations $BX=C_i$ are solvable too. Since $B$ has rank one, it follows that $\ran C_1=\ran C_2=\ran B$ and therefore $\ran E_1=\ran E_2$, hence $e_1$ and $e_2$ are linearly dependent. A similar argument applied to $\phi^{-1}$ shows that $\Phi:\lategyh \to \lategyh $ is a bijection. This proves Claim \ref{Cl:5}. 

\bigskip
It is clear that for all non-zero vectors $e,f,g\in\hil$  one has
\begin{equation*}
    \dupC g\subseteq \dupC e+\dupC f\quad \iff\quad \Phi(\dupC g)\subseteq \Phi(\dupC e)+\Phi(\dupC f),
\end{equation*}
hence $\Phi$ is a projectivity in the sense of \cite{Faure2}. The fundamental theorem of projective geometry (see e.g. \cite{Faure2}) implies that $\Phi$ is implemented by a semi-linear bijection $T:\hil\to\hil$  in the sense that  
\begin{equation*}
    \Phi(\dupC e)=\dupC \cdot Te,\qquad e\in\hil, e\neq 0.
\end{equation*}
Moreover, $T$ is unique up to a scalar multiplier.
Take now an $\mathcal{M}\in\lath$, then  
\begin{equation*}
    \Phi(\mathcal{M})=\Phi\bigg(\bigcup_{e\in\mathcal M} \dupC e\bigg)=\bigcup_{e\in\mathcal M}\Phi(\dupC e)=\bigcup_{e\in\mathcal M}\dupC\cdot Te=T(\mathcal M),
\end{equation*}
whence it follows that
\begin{equation}\label{E:ranTA}
    \ran \phi(A)=\ran TA,\qquad A\in\bh.
\end{equation}
By \cite{F-L}*{Theorem 1} we infer that $T:\hil\to\hil$ is either a linear or a conjugate linear topological isomorphism. 

\begin{claim}\label{Cl:S=lambdaT}
We claim that $S= \lambda T$ for some non-zero scalar $\lambda$.
\end{claim}
Let us assume first that both $S,T$ are  linear. In this case, $\phi(\lambda A)=\lambda \phi(A)$ for every $A\in\bh$ and $\lambda\in\dupC$. By \eqref{E:ranTA}, 
\begin{equation*}
    \phi(P_x)=P_{Tx},\qquad x\in\hil,
\end{equation*}
where $P_x$ denotes the orthogonal projection onto $\dupC x$. Hence 
\begin{equation*}
    \phi(XP_x)=\phi(X)\phi(P_x)=SXS^{-1}P_{Tx},\qquad X\in\glh, x\in\hil. 
\end{equation*}
Consequently, for every fixed $X\in\glh$,
\begin{equation*}
    \dupC\cdot TXx=\ran (TXP_x)=\ran \phi(XP_x)=\dupC\cdot SXS^{-1}Tx, \qquad \forall x\in\hil.
\end{equation*}
The uniqueness part of the fundamental theorem of projective geometry implies 
\begin{equation*}
    TX=\lambda_XSXS^{-1}T,\qquad X\in\glh,
\end{equation*}
where $\lambda_X\in \dupC$ is a scalar depending on $X$. After rearranging this yields $S^{-1}TX=\lambda_XXS^{-1}T$, whence, by setting $R\coloneqq S^{-1}T$, we obtain that 
\begin{equation}\label{E:RX}
    RX=\lambda_XXR,\qquad \forall X\in \glh.
\end{equation}
It is easy to check that the map $X\mapsto \lambda_X$ is continuous with $\lambda_I=1$, $\lambda_{XY}=\lambda_X\lambda_Y$ and $\lambda_{\alpha X}=\lambda_X$, $(0\neq \alpha\in\dupC)$. If $X,Y,X+Y\in\glh$ then also 
\begin{equation*}
    \lambda_{X+Y}(X+Y)=\lambda_XX+\lambda_YY,
\end{equation*}
whence we obtain 
\begin{equation*}
    [\lambda_{X+Y}-\lambda_X]I=[\lambda_Y-\lambda_{X+Y}]YX^{-1}.
\end{equation*}
Therefore, if $YX^{-1}\notin\dupC I$, then $\lambda(X)=\lambda(X+Y)=\lambda(Y)$. For every $X\in\glh$ with $\abs{\alpha}<\|X\|^{-1}$ we have $I+\alpha X\in\glh$ and so
\begin{equation*}
    \lambda_X=\lambda_{\alpha X}=\lambda _{I+\alpha X}=\lambda_I=1.
\end{equation*}
According to \eqref{E:RX} this means that $R$ commutes with every $X\in\glh$ and therefore $R\in\dupC I$. Consequently, $S=\alpha T$ for some (non-zero) $\alpha$.  

The same argument applies when $S$ and $T$ are both conjugate linear. 

Assume now that $S$ is linear and $T$ is conjugate linear, or conversely, that $S$ is conjugate linear and $T$ is linear. Just like above we conclude that \eqref{E:RX} holds true with the conjugate linear operator $R=S^{-1}T$. Then we have the following identities on the spectra:
\begin{equation*}
    \sigma(X^*)=\sigma(RXR^{-1})=\lambda_X\cdot \sigma(X),\qquad \forall X\in\glh
\end{equation*} 
But this cannot hold for every $X\in \glh$ (take e.g. an $X$ with spectrum $\sigma(X)=\{1,i,-2i\}$), so this is a contradiction. The proof of Claim \ref{Cl:S=lambdaT} is therefore complete.

\bigskip

We may and we will therefore assume that $S=T$.

\begin{claim}\label{Cl:7}
$T$ is unitary or anti-unitary
\end{claim}

Let  $P$ be an orthogonal projection, then $2P-I$ is unitary such that $(2P-I)P=P$. Hence 
\begin{equation*}
    \phi(P)=\phi(2P-I)\phi(P)=(2TPT^{-1}-I)\phi(P),
\end{equation*}
whence we get 
\begin{equation}\label{E:}
    \phi(P)=TPT^{-1}.
\end{equation}
In particular, for $e\in\hil, \|e\|=1$,  we have 
\begin{equation}
    \phi(e\otimes e)=Te\otimes (T^{-1})^*e.
\end{equation}
Furthermore, 
\begin{equation*}
    \phi(e\otimes e)=\phi(P_e)=P_{Te}=\frac{1}{\|Te\|^2}Te\otimes Te,
\end{equation*}
whence  $(T^{-1})^*e=\frac{1}{\|Te\|^2}Te$, and therefore   
\begin{equation*}
     Tx=\frac{\|Tx\|^2}{\|x\|^2}(T^{-1})^*x,\qquad x\in\hil, x\neq 0.
\end{equation*}
This yields that $ x\perp y$ implies $Tx\perp Ty$ for every pair of non-zero vectors  $x,y\in\hil$. An easy calculation shows that $\|Te\|^2=\|Tf\|^2\eqqcolon\alpha$ for every unit vectors $e,f\in\hil$, $e\perp f$.  It follows therefore that $U\coloneqq \alpha^{-1} T$ is either a unitary or an anti-unitary operator. 
\bigskip

We may therefore assume that $U=T$. Then 
\begin{equation}\label{E:phieoe}
    \phi(e\otimes e)=Ue\otimes Ue,\qquad e\in \hil, \|e\|=1.
\end{equation}
Let now $e,f\in \hil$ be unit vectors, then $\phi(e\otimes f)$ is of   rank one, namely, $\ran\phi(e\otimes f)=\dupC \cdot Ue$. Consequently,  $\phi(e\otimes f)=Ue\otimes u_{e,f}$ for some $u_{e,f}\in\hil$ (depending on the vectors $e,f$). Since  we have
\begin{equation*}
    e\otimes e=(e\otimes f)(f\otimes e), \qquad \ran (f\otimes e)\subseteq [\ker(e\otimes f)]^\perp,
\end{equation*}
from \eqref{E:BD1} it follows that 
\begin{align*}
    Ue\otimes Ue=(Ue\otimes u_{e,f})(Uf\otimes u_{f,e})=\sip{Uf}{u_{e,f}}\cdot (Ue\otimes u_{f,e}), 
\end{align*}
consequently $u_{f,e}\in\dupC\cdot Ue$. In particular, 
\begin{equation*}
    \phi(e\otimes f)=\lambda_{e,f} \cdot Ue\otimes Uf,
\end{equation*}
for some $\lambda_{e,f}\in\dupC$. Consider now a unitary operator $V\in\bh$ such that $Ve=f$. Then $f\otimes f=V(e\otimes f)$, whence it follows that 
\begin{equation*}
    Uf\otimes Uf=\phi(V)\phi(e\otimes f)=UVU^{-1}(\lambda_{e,f}Ue\otimes Uf)= \lambda_{e,f}\cdot Uf\otimes Uf,
\end{equation*}
and therefore $\lambda_{e,f}=1$. Consequently, $\phi(e\otimes f)=Ue\otimes Uf$ for every pair of unit vectors $e,f\in\hil$. This implies  
\begin{equation}\label{E:phieof2}
    \phi(x\otimes y)=Ux\otimes Uy,\qquad x,y\in \hil.
\end{equation}

To conclude the proof, consider an arbitrary operator $A\in\bh$. Take first a unit vector $e\in[\ker A]^\perp$ then the identities $\phi(Ae\otimes e)=\phi(A)\otimes \phi(e\otimes e)$ and \eqref{E:phieof2} imply
\begin{equation*}
    UAe\otimes Ue=\phi(A)Ue\otimes Ue. 
\end{equation*}
Hence  $UAe=\phi(A)Ue$. This proves that
\begin{equation}\label{E:phiAUe1}
    \phi(A)Ux=UAx,\qquad \forall x\in[\ker A]^\perp.
\end{equation}
Denote  by $P$ the orthogonal projection onto $[\ker A]^\perp$. Since the Douglas solution of $A=AX$ is $X=P$, the Douglas solution of $\phi(A)=\phi(A)Y$ is in turn $Y=\phi(P)$. Here,  $\phi(P)$ coincides with the orthogonal projection onto $U(\ker A^{\perp})$. Thus we conclude that
\begin{equation*}
    \ker \phi(A)=\ker \phi(P)=[U(\ker A^{\perp})]^\perp=U(\ker A).
\end{equation*}
In particular, 
\begin{equation}\label{E:phiAUx2}
    0=\phi(A)Ux=UAx,\qquad \forall x\in\ker A.
\end{equation}
From  identities \eqref{E:phiAUe1} and \eqref{E:phiAUx2} it follows that $\phi$ satisfies \eqref{E:UAU}.
\end{proof}
\begin{remark}
The above proof relies on tools which depend on the infinite-dimensionality of $\hil$.  It remains an intriguing open question whether the statement of the Theorem remains true in the finite dimensional case. 

\end{remark}
\textbf{Acknowledgement.} The author is grateful to Gy\"orgy P. Geh\'er for many enlightening conversations about the contents of the paper. He is also extremely grateful to Bence Horv\'ath for carefully reading the paper and for his valuable comments which much improved the exposition of the paper.
%%%%%%%%%%%%%%%%%%%%%%%%%%%%%%%%%%%%%%%%%%%%%%%%%%%%%%%%%%%%%%%%%%%%%%%%%%%%%%%%%%%%%%%%%%%%%%%%%%%%%%%%%%%%%%%%%%%%%%%%

\end{document}